\newcommand{\be}{\begin{equation}}
\newcommand{\ee}{\end{equation}}
\newcommand{\bel}[1]{\begin{equation}\label{#1}}
\newcommand{\bea}{\begin{eqnarray}}
\newcommand{\eea}{\end{eqnarray}}
\newcommand{\balign}{\begin{align}}
\newcommand{\ealign}{\end{align}}
\newcommand{\ba}{\begin{array}}
\newcommand{\ea}{\end{array}}
\newcommand{\bfig}{\begin{figure}}
\newcommand{\efig}{\end{figure}}
\newcommand{\eref}[1]{(\ref{#1})}
\newcommand{\ket}[1]{\mbox{$| \, {#1}\, \rangle$}}
\newcommand{\exval}[1]{\mbox{$\langle \, {#1}\, \rangle$}}
\newcommand{\half}{\frac{1}{2}}
\newcommand{\rme}{\mathrm{e}}
\newcommand{\rmd}{\mathrm{d}}
\newcommand{\Z}{{\mathbb Z}}
\newcommand{\N}{{\mathbb N}}
\renewcommand{\S}{{\mathbb S}}
\begin{document}

\title*{On the Fibonacci universality classes in nonlinear fluctuating
hydrodynamics}
\author{G.M.~Sch\"utz}
\institute{
G.M.~Sch\"utz \at Institute of Complex Systems II,
Forschungszentrum J\"ulich, 52425 J\"ulich, Germany \email{g.schuetz@fz-juelich.de}}
%
%
\maketitle

%
\abstract{We present a lattice gas model that without fine 
tuning of parameters is expected to exhibit the so far elusive
modified Kardar-Parisi-Zhang (KPZ) universality class. To this end, we review briefly how non-linear 
fluctuating hydrodynamics in one dimension 
predicts that all dynamical universality classes in its range of applicability belong to an
infinite discrete family which we call Fibonacci family
since their dynamical exponents are the Kepler ratios $z_i = F_{i+1}/F_{i}$ of neighbouring
Fibonacci numbers$F_i$, including diffusion ($z_2=2$), KPZ ($z_3=3/2$), and
the limiting ratio which is the golden mean $z_\infty=(1+\sqrt{5})/2$. Then we 
revisit the case of two conservation
laws to which the modified KPZ model belongs. We also derive criteria on the
macroscopic currents to lead to other non-KPZ universality
classes.
}

\section{Introduction}

It is well-known that in one dimension transport in stationary states
is usually anomalous even when the microscopic interactions are short-ranged
and noise is uncorrelated \cite{Lepr16}. This property manifests itself
in transport coefficients that diverge, usually algebraically, with system size, in contrast to
normal, i.e., diffusive transport where the transport coefficients are material-dependent
constants.
Also the spatio-temporal fluctuations of the densities $\rho_{\alpha}(x,t)$
of globally conserved quantities $N_\alpha$ (such as mass, energy, etc.) are frequently
characterized by non-diffusive scaling properties with dynamical exponents
$z_\alpha \neq 2$, including the celebrated  Kardar-Parisi-Zhang (KPZ)
universality class where $z=3/2$ \cite{Halp15,Spoh17}, and another universality class
with $z=3/2$ \cite{Bern14,Bern16,Popk14a,Spoh14}. 

If the large-scale behaviour of the fluctuations
is dominated by the long wave length modes of the conserved fields 
then the theory of nonlinear fluctuating hydrodynamics
(NLFH) \cite{Spoh14} predicts that, in a comoving frame with collective velocity $v_\alpha$,
the normalized dynamical structure functions, i.e., the
stationary correlations $S_{\alpha}(x,t):=\exval{\phi_{\alpha}(x,t)\phi_{\alpha}(0,0)}$ 
of the centered normal modes $\phi_{\alpha}(x,t) = \sum_{\beta} R_{\alpha\beta} (\rho_{\beta}(x,t)
-\rho_{\beta})$, have a scaling limit of the form 
$S_{\alpha}(x,t) = t^{-1/z_\alpha} f_{\alpha} ((x-v_\alpha t)/(\lambda_\alpha t)^{z_\alpha})$. 
Here $f_{\alpha}(\cdot)$ is a {\it universal}
scaling function that does not depend depend on the microscopic details of the interaction. Non-universal
are the scale factors $\lambda_\alpha$ as well as the collective velocities and the coefficients
$R_{\alpha\beta}$ that both depend on the stationary densities $\rho_{\alpha}=\exval{\rho_{\alpha}(x,t)}$
and the stationary currents $j^\alpha(\rho_1,\dots\rho_n)$ associated with the conserved quantities.
For diffusion the scaling function is a Gaussian, while for the KPZ universality class one has the
Pr\"ahofer-Spohn function \cite{Prae04}.

Thus the main quantities of interest in the study of spatio-temporal fluctuations
in one space dimension are the dynamical exponents $z_\alpha$ and the scaling functions
$f_{\alpha}(\cdot)$. They determine the dynamical universality class that a given microscopic
model belongs to. Remarkably, it was found \cite{Popk15b} that the possible dynamical exponents are 
either sequences of the Kepler ratios $z_i = F_{i+1}/F_{i}$ of neighbouring Fibonacci numbers $F_i$ 
beginning with $z=2=2/1$ or with $z=3/2$, or the golden mean $z=(1+\sqrt{5})/2$ which is the limiting
value $i\to\infty$ of the Kepler sequence. Also the corresponding scaling functions 
have been determined, with one exception, which is the so-called modified Kardar-Parisi-Zhang 
universality class \cite{Spoh15} for which, however, until now no generic microscopic model has 
been proposed.

In the following we briefly review the reasoning that leads to these predictions. We follow mainly the arguments
put forward in Refs. \cite{Spoh14} and \cite{Popk16} which lead, via mode coupling theory, to the conclusion that the dynamical
universality class of a mode can be deducted from the above-mentioned macroscopic stationary current-density relation
$j^\alpha(\rho_1,\dots\rho_n)$  through the so-called mode-coupling matrices (Sec. 2). Then we revisit the case of 
two conservation laws studied in some detail already in \cite{Popk15a} and \cite{Spoh15}. In Sec. 3 we construct a 
microscopic lattice gas model that, without fine-tuning of parameters,
is predicted to be in the modified Kardar-Parisi-Zhang universality class. Finally, in Sec. 4, 
we present in a ``consumer-friendly''
fashion the criteria on the currents $j^\alpha(\rho_1,\dots\rho_n)$ under which only non-KPZ universality
classes appear in systems with two conservation laws.

\section{Nonlinear fluctuating hydrodynamics}

\subsection{Notation and general properties of fluctuations}

In order to fix ideas we consider discrete microscopic models that evolve in continuous time $t$
and that have $n$ locally conserved quantities.
By this we mean the following. Let $\S$ be some set, $\Lambda$ denote a contiguous set of integers,
and $\eta_k\in \S$ with $k \in \Lambda$ be the local state variable. The index $k$ denotes a lattice site or a 
particle in a chain, depending on the type of model one has in mind. A microscopic configuration
at time $t$ is thus given by $\eta(t) = \{\eta_k(t):k\in\Lambda\}$.\footnote{When the time $t$
is irrelevant we drop the dependence on $t$.} The generator of the dynamics
is denoted by $\mathcal{L}$. The translation operator is denoted by $\mathcal{T}$ and defined
by the property $\mathcal{T}(\eta_k) = \eta_{k+1}$, and similar for functions of the
local state variables. We assume the dynamics to be translation invariant, i.e., 
$\mathcal{T} \mathcal{L} = \mathcal{L} \mathcal{T}$, with the identification $\eta_k \equiv \eta_{k+L}$
if $\Lambda$ is the integer torus.

In order to introduce conservation laws
consider a cylinder function  $\xi^\alpha_0(\eta)$
where $\alpha \in \{1,\dots, n\}$ and define $\xi^\alpha_{k}(\eta) := \mathcal{T}^k(\xi^\alpha_0(\eta))$.
We shall assume that (i) the $\xi^\alpha_k(\eta)$
satisfy the discrete continuity equations
\bel{dce}
\mathcal{L} (\xi^\alpha_k(\eta)) = j^\alpha_{k-1}(\eta) - j^\alpha_k(\eta)
\ee
for all $\alpha$, (ii) only the $\xi^\alpha_k(\eta)$ have this property, and
(iii) that also the so-called microscopic currents $j^\alpha_k(\eta)$ are cylinder functions. 
We shall drop the dependence of the conserved quantities $\xi^\alpha_k$
and currents $j^\alpha_k$ on the configuration $\eta$. 

Following \cite{Gris11} we postulate that there exists a family of 
translation invariant grand-canonical measures parametrized by fugacities $\varphi^\alpha$ which are
translation invariant and invariant under the dynamics generated by $\mathcal{L}$. Expectations under
this measure are denoted by $\exval{\cdot}$. In particular, we introduce the stationary
conserved densities $\rho_\alpha := \exval{\xi^\alpha_k}$ and
the stationary currents $j^\alpha := \exval{j^\alpha_k}$. The first and second
derivatives of the currents $j^\alpha$ w.r.t. the densities $\rho_\beta,\rho_\gamma$ 
are denoted by $j^\alpha_{\beta}$ and $j^\alpha_{\beta\gamma}$. 

The discrete fluctuation fields at time $t$ are the centered random variables 
$\bar{\rho}^\alpha_k(t) := \xi^\alpha_k(t) - \rho_\alpha$. 
The dynamical structure matrix $\mathbf{\bar{S}}_k(t)$ is defined by
the matrix elements
\bel{Sbar}
\bar{S}^{\alpha\beta}_k(t) := \exval{\bar{\rho}^\alpha_k(t)\bar{\rho}^\beta_0(0)}.
\ee
The compressibility matrix $\mathbf{K} = \sum_k\mathbf{\bar{S}}_k(t)$ is the covariance matrix of the 
conserved quantities which is independent
of time due to the conservation laws \eref{dce}. The current Jacobian
$\mathbf{J}$ is the matrix with matrix elements $J_{\alpha\beta}=j^\alpha_{\beta}$ and the
Hessians $\mathbf{H}^\alpha$ have matrix elements $H^\alpha_{\beta\gamma}=j^\alpha_{\beta\gamma}$.
The $n$-dimensional unit matrix is denoted by $\mathds{1}$.
Transposition of a matrix is denoted by the superscript $T$. 

We make the mild (but essential !) assumptions on the invariant measure that for all $\alpha,\beta$
one has $K_{\alpha\beta} < \infty$ and
$\lim_{n\to\infty} n \exval{\bar{\rho}^\alpha_0 j^\beta_n} = 0$. These 
hypotheses imply the Onsager-type current symmetry \cite{Gris11}
\bel{Ons}
\frac{\partial j^\beta}{\partial \phi^\alpha} = \frac{\partial j^\alpha}{\partial \phi^\beta}
\ee
for all $\alpha,\beta$.\footnote{It seems to have gone unnoticed that, quite remarkably, 
this symmetry relates a purely  static property of the invariant measure (the covariances 
$K_{\alpha\beta}$) with the microscopic dynamics which give rise to the currents $j^\alpha$. 
This restricts severely the possible microscopic dynamics for which a given 
measure can be invariant.} In particular, the chain rule implies \cite{Spoh14}
\bel{Ons2}
\mathbf{JK} = (\mathbf{JK})^T.
\ee
The current symmetry \eref{Ons} ensures that the current Jacobian has real eigenvalues which we denote
by $v_\alpha$. Counterexamples with non-decaying correlations are models that exhibit phase separation \cite{Chak16,Kafr02,Rama02}.
It should be noted that the assumption of locality of the
conserved quantity and the associated current as well as the finite number $n$ of
conservation laws also rules out models with infinitely many and non-local
conservation laws. Nevertheless, some of the phenomenology of such models
seems to be similar to the finite and local case \cite{Kund16}.

Throughout this work we shall assume complete absence of degeneracy
of the eigenvalues $v_\alpha$. 
Then one can always write $\mathbf{V} := \mathbf{R}\mathbf{J}\mathbf{R}^{-1} =
\mbox{diag}(v_1,\dots , v_n)$.
By convention we choose the normalization 
\bel{modenorm}
\mathbf{R}\mathbf{K}\mathbf{R}^T = \mathds{1}.
\ee
The eigenmodes are defined to be the transformed
fluctuation fields $\phi^\alpha_k(t) := \sum_\beta R_{\alpha\beta} \bar{\rho}^\beta_k(t)$.
They give rise to the normal form of the dynamical structure matrix
\bel{S}
\mathbf{S}_k(t) := \mathbf{R} \mathbf{\bar{S}}_k(t) \mathbf{R}^T 
\ee
which satisfies $\sum_k \mathbf{S}_k(t) = \mathds{1}$ for all $t$. 
The conservation law, translation invariance and the mild
decay of stationary correlations as assumed for the invariant measure yields the exact
relation $d/dt \sum_k k \mathbf{S}_k(t) = \mathbf{V}$ for all $t$.

We point out that the dynamical structure functions $S^{\alpha\beta}_k(t)$ have an 
alternative meaning as describing the relaxation of a microscopic perturbation of the 
invariant measure at the origin $k=0$ \cite{Popk15a}. As the perturbation evolves in time,
it separates into distinct density peaks, one for each mode $\alpha$. The eigenvalues $v_\alpha$ are 
the center-of-mass velocities of these perturbations. The variance w.r.t. $k$ of the 
diagonal structure function $S^{\alpha\alpha}_k(t)$ describes, on lattice scale, the spatial spreading
of mode $\alpha$.

\subsection{Nonlinear fluctuating hydrodynamics}

In the hydrodynamic limit where the ``lattice'' spacing tends to zero we denote
the scaling limits of the density by $\rho_\alpha(x,t)$,
$\bar{\rho}_\alpha(x,t)$, and $\phi_\alpha(x,t)$ resp.
Under Eulerian scaling one expects from the law of large numbers and local stationarity \cite{Kipn99}
that the discrete continuity equation \eref{dce} gives rise to the system of conservation laws
$\partial_t \rho_\alpha(x,t) + \partial_x j_\alpha(x,t) = 0$
where the currents $j_\alpha(x,t)$ depend on $x$ and $t$ only through the local
densities $\rho_\alpha(x,t)$ via the stationary current-density relation. Thus one can write
$j_\alpha(x,t) = j_\alpha(\rho_1(x,t),\dots,\rho_n(x,t))$ which gives
\bel{shcl}
\partial_t \rho_\alpha(x,t) + \sum_\beta 
J_{\alpha\beta}(\rho_1(x,t),\dots,\rho_n(x,t)) \partial_x \rho_\beta(x,t) = 0.
\ee
Non-degeneracy of $\mathbf{J}$ implies that this nonlinear system of conservation laws
is strictly hyperbolic. 
Obviously, the constant functions $\rho_\alpha(x,t)=\rho_\alpha$ form
a translation invariant stationary solution.

In order to study fluctuations one expands around a fixed stationary solution
and adds to the current a 
phenomenological diffusion term with diffusion matrix $\mathbf{\tilde{D}}(\rho_1,\dots,\rho_n)$ 
and Gaussian white noise $\tilde{\zeta}^\alpha(x,t)$ with an amplitude that is usually taken to
satisfy the fluctuation-dissipation theorem. 
Renormalization group
arguments suggest that only terms up to second order in the density expansion are relevant. Third order
terms may lead to logarithmic corrections to the fluctuations, but only if the
second-order term vanishes. All higher order terms vanish in the scaling limit of large
$x$ and large $t$ \cite{Devi92}. Thus, omitting arguments, one arrives at the non-linear
fluctuating hydrodynamics equation \cite{Spoh14}
\bel{nlfh}
\partial_t \bar{\rho}_\alpha(x,t) +  \partial_x  \left[\sum_\beta 
\hat{J}_{\alpha\beta}\bar{\rho}_\beta(x,t) + \half
\sum_{\beta\gamma} \bar{\rho}_\beta(x,t) H^\alpha_{\beta\gamma} \bar{\rho}_\gamma(x,t) + \tilde{\zeta}_\alpha(x,t)\right] = 0
\ee
with linear current operator $\hat{J}_{\alpha\beta} = J_{\alpha\beta} - \tilde{D}_{\alpha\beta} \partial_x$.

In terms of the eigenmodes one has
\bel{nlfh2}
\partial_t \phi_{\alpha}(x,t) + \partial_x  \left[\sum_{\beta} 
\hat{V}_{\alpha\beta} \partial_x \phi_{\beta}(x,t)
+ \sum_{\beta\gamma}\phi_{\beta}(x,t) G^{\alpha}_{\beta\gamma}  \phi_{\gamma}(x,t) + 
\zeta_{\alpha}(x,t)\right] = 0
\ee
with 
$\hat{V}_{\alpha\beta} = v_\alpha \delta_{\alpha\beta} - D_{\alpha\beta} \partial_x$
where $\mathbf{D} =  \mathbf{R} \tilde{\mathbf{D}} \mathbf{R}^{-1}$,
the symmetric mode coupling matrices
\bel{mcm}
\mathbf{G}^{\alpha} = \half \sum_{\lambda} R_{\alpha \lambda} (\mathbf{R}^{-1})^T \mathbf{H}^\lambda \mathbf{R}^{-1},
\ee
and transformed noise $\zeta_{\alpha}(x,t) = \sum_{\lambda} R_{\alpha\lambda} \tilde{\zeta}_{\lambda}(x,t)$
with covariance $\exval{\zeta_{\alpha}(x,t)\zeta_{\beta}(x',t')} = 2 D_{\alpha\beta} \delta(x-x')\delta(t-t')$.
One recognizes in \eref{nlfh2} a system of coupled noisy Burgers equations which with the substitution
$\bar{\rho}_\alpha(x,t) = \partial_x h_\alpha(x,t)$ turns into a system of coupled KPZ equations 
\cite{Erda93,Ferr13,Funa15,Schu17}.

\subsection{Mode coupling theory}

Following \cite{Spoh14} one writes the stochastic pde \eref{nlfh2} in discretized form 
$\phi_\alpha(x,t) \to \phi_\alpha(n,t)$ with $n\in\Z$ in terms of a generator $L=L_0+L_1$ 
where $L_0$ represents the linear part involving $\hat{\mathbf{V}}$ and $L_1$ represents the
non-linear part involving the mode-coupling matrices $\mathbf{G}^{\alpha}$.
This yields $S_{\alpha\beta}(n,t) = \exval{\phi_\beta(0,0) \rme^{Lt} \phi_\alpha(n,0)}$
and therefore
\bel{discnlfh} 
\frac{\rmd}{\rmd t} S_{\alpha\beta}(n,t) = \exval{\phi_\beta(0,0)  \rme^{Lt} L_0 \phi_\alpha(n,0)} +
\exval{\phi_\beta(0,0)  \left(\rme^{Lt} L_1 \phi_\alpha(n,0)\right)}.
\ee
The discretization of the generator is chosen such that a product of mean-zero Gaussian measures
for the $\phi_\alpha(n)\equiv\phi_\alpha(n,0)$ is invariant under the stochastic evolution.

We insert the identity $\rme^{Lt} = \rme^{L_0 t} + \int_0^t \rmd s \, \rme^{ L_0(t- s)} L_1 \rme^{L s}$ 
into the second term on the r.h.s. of \eref{discnlfh}. The first contribution involving only the
linear evolution vanishes
since by closer inspection one realizes that one is left with the expectation of cubic
terms which are zero. The second contribution involves higher order correlators which due to the Gaussian
measure can be factorized into pair correlations using the Wick rule. Finally, one replaces the bare evolution 
$\rme^{ L_0(t- s)}$ by the interacting evolution $\rme^{ L(t- s)}$ and takes the continuum
limit.  One arrives at the mode coupling equation \cite{Spoh14}
\bea
\partial_t S_{\alpha\beta}(x,t) & = & - v_\alpha \partial_x S_{\alpha\beta}(x,t)
+ \sum_\gamma D_{\alpha\gamma} \partial_x^2 S_{\gamma\beta}(x,t) \nonumber \\
\label{mctfull}
& & + \int_0^t \rmd s \, \int_{-\infty}^\infty \rmd y \, \partial_y^2 \sum_\gamma
M_{\alpha\gamma}(y,s) S_{\gamma\beta}(x-y,t-s)
\eea
with the memory term
\bel{mtfull}
M_{\alpha\gamma}(y,s) = 2 \sum_{\mu\mu'\nu\nu'} G^\alpha_{\mu\nu} G^\gamma_{\mu'\nu'} S_{\mu\mu'}(y,s)S_{\nu\nu'}(y,s).
\ee

Next we recall that in the strictly hyperbolic case all modes drift with different velocities.
Hence after time $t$ their centers of mass are a distance of order $t$ apart. On the other hand, the broadening
of the peaks is expected to grow sublinearly. Hence, eventually the offdiagonal terms $S_{\alpha\beta}(x,t)$ die out and
can be neglected. With $S_{\alpha}(x,t)\equiv S_{\alpha\alpha}(x,t)$ the mode-coupling equations thus reduce to
\bea
\partial_t S_{\alpha}(x,t) & = & - v_\alpha \partial_x S_{\alpha}(x,t)
+ D_{\alpha\alpha} \partial_x^2 S_{\alpha}(x,t) \nonumber \\
\label{mctdiag}
& & + \int_0^t \rmd s \, \int_{-\infty}^\infty \rmd y \, \partial_y^2 
M_{\alpha\alpha}(y,s) S_{\alpha}(x-y,t-s)
\eea
with the memory term
\bel{mtdiag}
M_{\alpha\alpha}(y,s) = 2 \sum_{\mu\nu} \left(G^\alpha_{\mu\nu}\right)^2 S_{\mu}(y,s)S_{\nu}(y,s).
\ee

\subsection{Fibonacci universality classes}

It was found in \cite{Popk15b,Popk16} that the mode coupling equations \eref{mctdiag} have 
scaling solutions which can be obtained in explicit form after Fourier and Laplace transformation.
In order to classify the solutions we recall the recursive definition of the Fibonacci 
numbers $F_{n+1} = F_n + F_{n-1}$ with $F_1=F_2=1$ and introduce the set 
$\mathbb{I}_\alpha := \{\beta: G^{\alpha}_{\beta\beta} \neq 0\}$
of modes $\beta$ that give rise to a non-linear term in the time-evolution of mode $\alpha$.

\begin{theorem}[Refs. \cite{Popk15b,Popk16}] 
\label{Theo:Fibonacci}
Let $u=p t^{1/z_{\alpha}}$ with dynamical exponent
$z_\alpha > 1$. Then\\[2mm]
(1) In Fourier representation 
$\hat{S}_\alpha(p,t) := \frac{1}{\sqrt{2\pi}}\int_{-\infty}^\infty \rmd x\,
\rme^{- i p x} S_\alpha(x,t)$
the mode coupling equation \eref{mctdiag} with memory term
\eref{mtdiag} has for finite $|u|$ the scaling solution
\bel{scalingformFT2}
\lim_{t\to\infty} \rme^{iv_\alpha pt} \hat{S}_\alpha(p,t) =
 \hat{f}_{\alpha} (u)
\ee
where\\[2mm]
$\bullet$ Case 1: For modes $\alpha$ such that $\mathbb{I}_\alpha = \emptyset$
one has $z_{\alpha} =2$ and $ \hat{f}_{\alpha} (u) = \frac{1}{\sqrt{2\pi}} \rme^{- D_\alpha u^2}$ (diffusive universality class).\\[2mm]
$\bullet$ Case 2: For modes $\alpha$ such that  $\mathbb{I}_\alpha \neq \emptyset$ and $\alpha \notin \mathbb{I}_\alpha$ 
the dynamical exponents satisfy the nonlinear recursion
$z_\alpha = \min_{\beta \in \mathbb{I}_\alpha}\left[\left(1 + \frac{1}{z_\beta}\right)\right] $ and the scaling function
is given by
$ \hat{f}_{\alpha} (u) = \frac{1}{\sqrt{2\pi}} \rme^{
- E_\alpha |u|^{z_\alpha} \left[1- i A_\alpha \tan{\left(
\frac{\pi z_\alpha }{2}\right)} u/|u| \right]}$ with explicit real
constants $E_\alpha > 0$, $A_\alpha \in [-1,1]$ given in \cite{Popk16} (L\'evy universality class).
\\[2mm]
$\bullet$ Case 3: If $\alpha \in \mathbb{I}_\alpha$ then $z_{\alpha} =3/2$. (a) 
If there is no diffusive mode $\beta \in \mathbb{I}_\alpha$, then $\hat{f}_{\alpha}(u) = \hat{f}^{MCT}_{KPZ}(u)$ 
given in \cite{Popk16} (KPZ universality class).
b) If there is at least one diffusive mode $\beta \in \mathbb{I}_\alpha$, then 
$\hat{f}_{\alpha}(u) = \hat{f}^{MCT}_{mKPZ}(u)$  given in \cite{Popk16} (modified KPZ universality class).\\[2mm]
(2) The non-linear recursion for the dynamical exponents in case 2 has as unique solution the
sequence of Kepler ratios of Fibonacci numbers $z_i = F_{i+1}/F_i$, starting
from $z_3=3/2$ (if at least one diffusive mode $\beta \in \mathbb{I}_\alpha$), or $z_4=5/3$ 
(if no diffusive mode but at least one KPZ mode $\beta \in \mathbb{I}_\alpha$)
or else the golden mean $z_i = (1+\sqrt{5})/2$ for all modes $i$.
\end{theorem}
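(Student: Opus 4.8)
The plan is to extract both the exponents and the scaling functions directly from the diagonal mode-coupling equation \eref{mctdiag} with memory \eref{mtdiag}. First I would pass to the comoving frame of mode $\alpha$, Fourier transform in space and Laplace transform in time; writing $\tilde{S}_\alpha(p,\omega)$ for the double transform, the two convolutions in \eref{mctdiag} collapse and, using the normalization $\hat{S}_\alpha(p,0)=1/\sqrt{2\pi}$, one obtains the Dyson-type relation
\bel{plan-dyson}
\tilde{S}_\alpha(p,\omega) = \frac{1/\sqrt{2\pi}}{\omega + D_{\alpha\alpha}\,p^2 + p^2\,\tilde{M}_{\alpha\alpha}(p,\omega)} .
\ee
The scaling ansatz \eref{scalingformFT2} is then equivalent to demanding that the self-energy $\Sigma_\alpha:=p^2\tilde{M}_{\alpha\alpha}$ be asymptotically homogeneous, $\Sigma_\alpha(p,\omega)\sim|p|^{z_\alpha}$ as $p,\omega\to 0$ along $\omega\sim|p|^{z_\alpha}$, so that the whole problem reduces to computing the leading small-$(p,\omega)$ singularity of $\tilde{M}_{\alpha\alpha}$.

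Next I would insert the (inductively known) scaling forms of the slower modes into \eref{mtdiag}. Strict hyperbolicity lets me discard the off-diagonal contributions $\mu\neq\nu$: in the frame of $\alpha$ the two factors travel at different velocities, so their spatial overlap, and hence the integral $\int S_\mu S_\nu\,\rmd y$, decays faster than any diagonal term and drops out of the scaling limit. The surviving diagonal terms $\mu=\nu=\beta$ with $\beta\in\mathbb{I}_\alpha$, whose two factors ride together, carry the structure $\hat{M}_{\alpha\alpha}(p,s)\sim\sum_{\beta\in\mathbb{I}_\alpha}(G^\alpha_{\beta\beta})^2\,s^{-1/z_\beta}\Phi_\beta(ps^{1/z_\beta})\,\rme^{-i(v_\beta-v_\alpha)ps}$, the phase recording the ballistic drift of the whole $\beta$-packet relative to $\alpha$. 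This phase is the crux of the argument. For $\beta\neq\alpha$ (Case 2) the drift $v_\beta-v_\alpha\neq 0$ dominates the effective Laplace variable, since $|(v_\beta-v_\alpha)p|\sim|p|$ overwhelms both $\omega\sim|p|^{z_\alpha}$ and $D_{\alpha\alpha}p^2$; evaluating the integral with $\Phi_\beta$ frozen at the origin gives $\tilde{M}_{\alpha\alpha}(p,\omega)\sim\big(i(v_\beta-v_\alpha)p\big)^{1/z_\beta-1}$ and hence $\Sigma_\alpha\sim|p|^{1+1/z_\beta}$. Balancing against $\omega$ fixes $z_\alpha=1+1/z_\beta$ for that channel, and since at small $p$ the smallest such exponent dominates the sum one gets $z_\alpha=\min_{\beta\in\mathbb{I}_\alpha}(1+1/z_\beta)$; the complex power $(ip)^{1/z_\beta-1}$ moreover supplies exactly the asymmetric phase $\tan(\pi z_\alpha/2)\,p/|p|$ of the L\'evy scaling function, from whose real and imaginary parts the constants $E_\alpha,A_\alpha$ of \cite{Popk16} are read off. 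The other two cases are the degenerate limits of this mechanism: when $\mathbb{I}_\alpha=\emptyset$ (Case 1) no diagonal memory survives, \eref{plan-dyson} reduces to the bare diffusive propagator and inversion yields $z_\alpha=2$ with the Gaussian $\hat{f}_\alpha$; when $\alpha\in\mathbb{I}_\alpha$ (Case 3) the driving mode is $\alpha$ itself, the relative drift vanishes and the phase mechanism is absent, so the self-referential kernel $\tilde{M}_{\alpha\alpha}\sim\widetilde{S_\alpha^2}$ must be closed self-consistently; solving $\Sigma_\alpha\sim\omega$ then gives the single fixed value $z_\alpha=3/2$ rather than $1+1/z_\alpha$, reproducing the closed KPZ mode-coupling fixed point, while the presence or absence of a diffusive $\beta\in\mathbb{I}_\alpha$ distinguishes the modified-KPZ from the KPZ scaling function of \cite{Popk16}.

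For part (2) the task is purely arithmetic: to solve $z_\alpha=\min_{\beta\in\mathbb{I}_\alpha}(1+1/z_\beta)$. The key observation is that the map $g(z):=1+1/z$ acts on Kepler ratios as a shift,
\bel{plan-shift}
g\!\left(\frac{F_{i+1}}{F_i}\right)=1+\frac{F_i}{F_{i+1}}=\frac{F_{i+1}+F_i}{F_{i+1}}=\frac{F_{i+2}}{F_{i+1}},
\ee
by the defining recursion $F_{i+2}=F_{i+1}+F_i$. Since $g$ is strictly decreasing on $(1,\infty)$ with the unique fixed point $z_\infty=(1+\sqrt{5})/2$ (the positive root of $z^2=z+1$) and $|g'(z_\infty)|=z_\infty^{-2}<1$, iterating $g$ from the diffusive seed $z_2=2=F_3/F_2$ generates exactly the Kepler sequence $z_3=3/2,\,z_4=5/3,\,z_5=8/5,\dots\to z_\infty$; starting instead from the KPZ seed $z_3=3/2$ (the case where $\mathbb{I}_\alpha$ contains no diffusive but at least one KPZ mode) yields the tail from $z_4=5/3$ onward; and a cluster of modes that mutually drive one another with no diffusive or KPZ seed can only sit at the attracting fixed point, giving $z_i=z_\infty$ throughout. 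Uniqueness follows because the minimum selects at each mode the largest available driving exponent, so the two base cases established in part (1), namely $z=2$ for empty-coupling modes and $z=3/2$ for self-coupled modes, propagate through \eref{plan-shift} to fix the entire solution.

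The main obstacle is not the arithmetic of part (2) but the singularity analysis behind part (1): rigorously controlling the interchange of the scaling limit with the memory integral, and in particular justifying that the drift phase $\rme^{-i(v_\beta-v_\alpha)ps}$ really does dominate over $\omega$ and $D_{\alpha\alpha}p^2$ uniformly in the scaling regime, so that the frozen-$\Phi_\beta$ evaluation is asymptotically exact. Closing the self-referential Case 3, where no such phase is available, and recovering the explicit KPZ and modified-KPZ scaling functions is the other delicate point; this requires the Fourier--Laplace inversion carried out in \cite{Popk16}, and it is here that I expect the bulk of the technical work to lie.
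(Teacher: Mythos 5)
Your proposal is correct and follows essentially the same route as the paper, which does not prove Theorem~\ref{Theo:Fibonacci} in detail but attributes it to Refs.~\cite{Popk15b,Popk16} and summarizes the proof as resting on strict hyperbolicity plus power counting of the leading singularities in the Fourier--Laplace representation for item (1), and on the Fibonacci recursion over a suitable ordering of the modes for item (2). Your Dyson-type self-energy relation, the drift-phase domination giving $z_\alpha=\min_{\beta\in\mathbb{I}_\alpha}(1+1/z_\beta)$, the self-consistent closure yielding $z_\alpha=3/2$ when $\alpha\in\mathbb{I}_\alpha$, and the shift action $g(F_{i+1}/F_i)=F_{i+2}/F_{i+1}$ with attracting fixed point $(1+\sqrt{5})/2$ are precisely these ingredients, correctly assembled, with the remaining technical work (uniformity of the frozen-kernel approximation, explicit scaling functions) rightly identified as the content of \cite{Popk16}.
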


\begin{remark}
The subballistic scaling $z_\alpha>1$ is motivated by the locality of interactions, conservation laws 
and currents.
Since all dynamical exponents that can appear are Kepler ratios of neighbouring Fibonacci
numbers we call the whole family of universality classes comprising diffusion, L\'evy, KPZ and
modified KPZ the {\em Fibonacci universality classes}. 
\end{remark}

The main ingredients in the proof of item (a) are strict hyperbolicity and power counting of the 
leading singularities in the Fourier-Laplace representation of the mode-coupling equation. Item (b) 
follows from the recursion of the Fibonacci numbers by a judiciously chosen ordering of the modes
belonging to case 2.

We stress that the theorem deals with the function $S_\alpha(x,t)$ satisfying the mode coupling
equations \eref{mctdiag}. There is no general rigorous result how this function relates to the
true scaling limit of the dynamical structure function $S^{\alpha\alpha}_k(t)$. However,
in the diffusive case 1 one expects the Gaussian scaling function to be the true scaling limit, up to possible
logarithmic corrections. For specific models there are numerical \cite{Popk14a,Popk15a,Popk15b} 
and mathematically rigorous results \cite{Bern14,Bern16} that suggest that the true scaling form
in case 2 is indeed generally a L\'evy distribution. However, the coefficients
$A_\alpha, E_\alpha$ arising from the mode-coupling equations are not believed to correspond to the true values.
The scaling limit of $S_\alpha(x,t)$ has a closed expression
in Fourier-Laplace representation also in case 3. However, for the case of a single conservation
law (the usual KPZ universality class) it is known that this scaling function, studied in detail in
\cite{Cola01,Frey96}, is not exact but rather given by the Pr\"ahofer-Spohn scaling function \cite{Prae04}.
Correspondingly, one does not expect the scaling forms $\hat{f}^{MCT}_{KPZ}(u)$ and $\hat{f}^{MCT}_{mKPZ}(u)$ 
solving the mode coupling equations \eref{mctdiag} to exact either.

With these provisos Theorem 1 shows that, according to the mode coupling theory reviewed above, 
the dynamical universality classes of all modes are fully determined by 
whether or not the diagonal elements of the mode coupling matrices vanish. This in turn is
fully determined by the stationary current-density density relation. Thus, as long as mode-coupling theory is valid, 
one can read off from this macroscopic
stationary quantity alone the dynamical universality classes of all modes.

\section{Two-lane lattice gas for the modified KPZ universality class}

The modified KPZ universality class \cite{Spoh15} arises for
$G^1_{11} = G^1_{22} = 0$ and $G^2_{11} \neq 0, G^2_{22} \neq 0$,  see case 3b) in Theorem 1.
However, so far no microscopic model with this property has been proposed.
Here we present a two-lane lattice gas that belongs to case 3b) in Theorem 1
for all values of the conserved densities without fine-tuning of
model parameters. This model consists of two coupled one-dimensional lattice
gases without self-interaction, but where the jump rates between sites $k$ and
$k+1$ of the particles of one gas depend on the number of particles of the other gas
on the same pair of sites. It is convenient to describe this a model as a two-lane
lattice gas model in the spirit of the multi-lane exclusion processes of Popkov and
Salerno \cite{Popk04}, but without requiring exclusion. 

We denote the number of particles on site $k$ on lane $i$ as particles of type by $n_k^i$. Thus the
local state variable is the pair $\eta_k = (n_k^1,n_k^2)\in \N_0^2$. We introduce the parameters
\be 
r_i = \half(w_i + f_i), \quad \ell_i = \half(w_i - f_i) , \quad
g^\pm = \half(\alpha \pm \gamma)
\ee
with strictly positive constants $w_i,\alpha > 0$, and $|f_i| \leq w_i$, $|\gamma| \leq \alpha$. 
We also define the mean pair occupation numbers
\be 
\bar{n}^i_{k} = \half \left( n^i_k + n^i_{k+1}\right).
\ee
Informally, the stochastic dynamics is then defined as follows. A particle on lane $1$ jumps independently of 
all other particles on lane $1$ after an exponential waiting time from site $k$ to site $k+1$ of lane $1$ 
with rate $r^1(\bar{n}^2_{k})=r_1 + g^+\bar{n}^2_{k}$ and from site $k+1$ to $k$ 
with rate $\ell^1(\bar{n}^2_{k})=\ell_1 + g^- \bar{n}^2_{k}$. Likewise, particles on lane $2$ jump 
with rates $r^2(\bar{n}^1_{k})=r_2 + g^+ \bar{n}^1_{k}$ and from site $k+1$ to $k$ and
with rate $\ell^2(\bar{n}^1_{k})=\ell_2 + g^-\bar{n}^1_{k}$. Thus $w_i$ are the ``bare'' jump rates (i.e., in the absence of
interaction), $f_i$ are the bare jump biases, $\alpha$ is the interaction strength and
$\gamma$ is the interaction asymmetry.

With the updated state variable
\be 
\eta^{i;k,k'}_l = \left\{ \ba{ll} n_k^i -1 & \mbox{if } l=k \\ n_{k'}^i +1 & \mbox{if } l=k' \\
n_k^i  & \mbox{else} \ea \right.
\ee 
the generators for independent random walkers read
\be 
\mathcal{L}_i = \sum_k \left[ r_i \left( f(\eta^{i;k,k+1}) - f(\eta)\right) + \ell_i \left( f(\eta^{i;k+1,k}) - f(\eta)\right)\right].
\ee
The interaction between the lanes is given by the generator
\bea 
\mathcal{L}_{I} \left(f(\eta)\right) & = &  \sum_k \left\{\bar{n}^2_{k}  \left[ g^+ f(\eta^{1;k,k+1}) 
+ g^- f(\eta^{1;k+1,k}) - (g^+ + g^-) f(\eta) \right] \right. \nonumber \\
& & + \left. \bar{n}^1_{k}  \left[ g^+ f(\eta^{2;k,k+1}) 
+ g^- f(\eta^{2;k+1,k}) - (g^+ + g^-) f(\eta) \right]\right\}.
\eea
The generator of the full interacting process is then 
\bel{LmodKPZ}
\mathcal{L} = \mathcal{L}_1 + \mathcal{L}_2 + \mathcal{L}_{I}.
\ee

The interaction between the lanes does not change the invariant measure of the
non-interacting part. Thus one arrives at
\begin{proposition}
\label{Prop:statmeas}
For parameters $\rho_{1,2} \geq 0$ the product measure with factorized Poisson marginals
\bel{pm}
\mu(\eta_k) = \frac{\rho_1^{n^1_k}\rme^{-\rho_1}}{(n^1_k)!}  \frac{\rho_1^{n^2_k}\rme^{-\rho_2}}{(n^2_k)!} 
\ee
for the occupation at site $k$ is a translation invariant measure of the process defined by the generator
\eref{LmodKPZ}.
\end{proposition}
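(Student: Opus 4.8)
The plan is to verify invariance in its weak form, i.e.\ to show that $\langle \mathcal{L}f\rangle = 0$ for every cylinder function $f$, where $\langle\,\cdot\,\rangle$ denotes expectation under the product measure \eref{pm}; together with the manifest translation invariance of that measure this is exactly the assertion. Since $f$ is local, only finitely many bonds contribute to each of the lattice sums defining $\mathcal{L}_1,\mathcal{L}_2,\mathcal{L}_I$ in \eref{LmodKPZ}, so every interchange of summation and integration below is harmless. The single computational tool I would use throughout is the Poisson integration-by-parts identity: if $N$ is Poisson distributed with mean $\rho$, then
\be
\langle N\, g(N)\rangle = \rho\,\langle g(N+1)\rangle .
\ee
Under \eref{pm} this lets me trade the departure-site occupation factor that each transition rate carries (each of the $n^i_k$ particles on a site hops independently, so the total rate out of that site is proportional to $n^i_k$) for a shift operator $\theta^{i,+}_\ell$ that merely inserts one extra lane-$i$ particle at a site, at the cost of a scalar prefactor $\rho_i$.

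First I would dispose of the non-interacting generators $\mathcal{L}_1$ and $\mathcal{L}_2$. Applying the identity to the right jump across bond $k$ turns $\langle n^i_k\,(f(\eta^{i;k,k+1})-f)\rangle$ into $\rho_i\,\langle f(\theta^{i,+}_{k+1}\eta)-f(\theta^{i,+}_k\eta)\rangle$, while the left jump across the same bond yields the opposite combination. Summing the two, the bond-$k$ contribution to $\langle \mathcal{L}_i f\rangle$ is proportional to $f(\theta^{i,+}_{k+1}\eta)-f(\theta^{i,+}_k\eta)$, and the sum over $k$ telescopes. Because $f$ is a cylinder function the summand is eventually constant in $k$ on both sides, so the bi-infinite telescoping sum vanishes. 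This step is nothing but the classical fact that a homogeneous product of Poisson marginals is stationary for independent random walkers, and it gives $\langle\mathcal{L}_1 f\rangle = \langle\mathcal{L}_2 f\rangle = 0$.

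The substance of the proposition is $\langle\mathcal{L}_I f\rangle = 0$, and here I expect the real work. Splitting $\mathcal{L}_I=\mathcal{L}_I^{(1)}+\mathcal{L}_I^{(2)}$ into the lane-$1$ and lane-$2$ moves, I would first treat $\mathcal{L}_I^{(1)}$ exactly as above, applying the Poisson identity to the hopping lane; this time, however, each bond-$k$ term drags along the spectator weight $\bar{n}^2_k=\half(n^2_k+n^2_{k+1})$ coming from the other lane. Since this weight depends on $k$, the telescoping no longer closes: after reindexing one is left with a residual proportional to $\gamma\sum_k\langle(n^2_{k-1}-n^2_{k+1})\,f(\theta^{1,+}_k\eta)\rangle$, where $g^+-g^-=\gamma$ is the interaction asymmetry. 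A second application of the Poisson identity, now to the lane-$2$ occupations inside this residual, rewrites it as a sum of doubly shifted expectations $\langle f\rangle$ in which one lane-$1$ and one lane-$2$ particle have been inserted at neighbouring sites.

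The hard part, and the crux, is then to show that this residual is cancelled exactly by the analogous residual of $\mathcal{L}_I^{(2)}$. The symmetric computation for the lane-$2$ moves produces the same family of doubly shifted expectations but with the two lanes and the two neighbouring sites interchanged; reindexing the lattice sums so that the inserted-particle configurations are matched, the two families coincide up to sign and cancel term by term, yielding $\langle\mathcal{L}_I f\rangle=0$ and hence $\langle\mathcal{L}f\rangle=0$. I expect two structural features of the model to be indispensable precisely here: that the coupling enters through the bond-symmetric average $\bar{n}^i_k=\half(n^i_k+n^i_{k+1})$ rather than a single-site occupation, and that the rates $g^\pm$ are identical on the two lanes. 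With a single-site coupling the residual would instead assemble into a discrete second difference of $\langle f\rangle$ over the inserted-particle position and would not cancel, while with unequal lane couplings the two residuals would carry different interaction asymmetries; in either case the cancellation fails and $\mu$ is no longer invariant. This cancellation, together with the bookkeeping of the double shift, is the one point I would check most carefully.
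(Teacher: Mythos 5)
Your proposal is correct, and the crux you flag does close: after the first Poisson integration by parts on the hopping lane the symmetric part $g^++g^-=\alpha$ of the coupling cancels bond by bond, the lane-1 residual is $\frac{\rho_1\gamma}{2}\sum_k\exval{(n^2_{k-1}-n^2_{k+1})\,f(\theta^{1,+}_k\eta)}$, and a second integration by parts turns it into $\frac{\rho_1\rho_2\gamma}{2}\sum_k\exval{f(\theta^{1,+}_k\theta^{2,+}_{k-1}\eta)-f(\theta^{1,+}_k\theta^{2,+}_{k+1}\eta)}$, which after reindexing is exactly the negative of the lane-2 residual. Your route is the probabilistic dual of the paper's proof rather than a copy of it: the paper works on the finite torus in the quantum operator formalism, where the relations $a^{\alpha,-}_k\ket{\rho_1,\rho_2}=\rho_\alpha\ket{\rho_1,\rho_2}$ and $a^{\alpha,+}_k\ket{\rho_1,\rho_2}=(\hat{n}^\alpha_k/\rho_\alpha)\ket{\rho_1,\rho_2}$ --- the operator transcription of your identity $\exval{N g(N)}=\rho\exval{g(N+1)}$, used in the opposite direction, trading hopping operators for number operators instead of occupation factors for shifts --- reduce $(g^1_k+g^2_k)\ket{\rho_1,\rho_2}$ to $-\gamma\left(\hat{n}^1_{k+1}\hat{n}^2_{k+1}-\hat{n}^1_k\hat{n}^2_k\right)\ket{\rho_1,\rho_2}$, so that the two lanes' contributions assemble into a perfect difference and invariance follows from telescoping on the torus; your version proves weak invariance $\exval{\mathcal{L}f}=0$ for cylinder $f$ directly on the infinite lattice, with the same two-lane cancellation appearing as a pairwise matching of doubly shifted expectations (your bi-infinite sums are fine because all summands vanish once the shifted sites leave the support of $f$, where the insertion acts trivially and the occupation factors decouple). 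What your version buys is that it avoids the torus and makes manifest, already after the first step, why the interaction strength $\alpha$ never enters; the paper's version is more compact because the bond-symmetric coupling makes the cross terms combine algebraically in one line. One correction to your closing remarks: identical $g^\pm$ on the two lanes is sufficient but not necessary. Since the symmetric part cancels within each lane separately, the two residuals are proportional to the respective asymmetries $\gamma_1$ and $-\gamma_2$, so invariance survives unequal strengths $\alpha_1\neq\alpha_2$ provided $\gamma_1=\gamma_2$ --- exactly the content of the paper's footnote; your own phrase ``different interaction asymmetries'' essentially concedes this, but your headline claim overstates it. Your diagnosis of the bond-symmetric average $\bar{n}^i_k$ as indispensable is, by contrast, exactly right: with a single-site coupling the residuals add up to a discrete second difference of inserted-particle expectations, which does not vanish.
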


\begin{proof}
We prove the proposition for the finite torus $\Lambda=\{1,\dots,L\}$ with $L$ sites in the quantum operator 
formalism \cite{Schu01,Sudb95}. 
Since \eref{pm} is invariant for the non-interacting part $\mathcal{L}_1 + \mathcal{L}_2$ of the
generator one needs to prove only invariance under $\mathcal{L}_{I}$. For 
$n\in\N_0$ let $\ket{n}$ be the infinite-dimensional
vector with components $(\ket{n})_i=\delta_i$ and define  the tensor vectors $\ket{n^1,n^2}=\ket{n^1}\otimes\ket{n^2}$.
Furthermore, define the matrices $\mathbf{1}$, $a^{\pm}$ and $\hat{n}$ by
$a^{+}\ket{n} = \ket{n+1}$, $a^-\ket{n} = n \ket{n-1}$, $\hat{n}\ket{n} = n \ket{n}$, $\mathbf{1}\ket{n} = n \ket{n}$
and also the tensor products $a^{1,\pm} = a^\pm \otimes \mathbf{1}$, $a^{2,\pm} = \mathbf{1} \otimes a^\pm$,
$\hat{n}^1= \hat{n} \otimes \mathbf{1}$, $\hat{n}^2 = \mathbf{1} \otimes \hat{n}$, $\mathbf{1}_{12} 
= \mathbf{1} \otimes \mathbf{1}$ and $X_k = \mathbf{1}_{12}^{\otimes(k-1)} \otimes X  \otimes  
\mathbf{1}_{12}^{\otimes(L-k)}$ where
$X$ is any of the two-fold tensor products just defined. The matrix form $H_I$ of the generator $\mathcal{L}_{I}$ is then given by
$ H_I = \sum_k (g_k^1 + g_k^2)$
with
\bea 
g_k^1 & = &
- \half  \left\{ \left(\hat{n}^{2}_k+\hat{n}^{2}_{k+1}\right) 
\left[ g^+ \left( a^{1,-}_k a^{1,+}_{k+1} - \hat{n}^{1}_k \right) 
+ g^- \left( a^{1,+}_k a^{1,-}_{k+1} - \hat{n}^{1}_{k+1} \right)\right] \right\}
\\
g_k^2 & = & 
- \half \left\{ \left(\hat{n}^{1}_k+\hat{n}^{1}_{k+1}\right) 
\left[ g^+ \left( a^{2,-}_k a^{2,+}_{k+1} - \hat{n}^{2}_k \right) 
+ g^- \left( a^{2,+}_k a^{2,-}_{k+1} - \hat{n}^{2}_{k+1} \right)\right] \right\}.
\eea
Let $\ket{\rho} = \rme^{-\rho} \sum_{n=0}^\infty \rho^n/(n!) \ket{n}$ and
$\ket{\rho_1,\rho_2} =( \ket{\rho_1} \otimes \ket{\rho_2} )^{\otimes L}$ for $\rho_{1,2}\geq 0$.
One has $a^{\alpha,+}_k \ket{\rho_1,\rho_2} = \hat{n}^\alpha_k / \rho_\alpha$ and
$a^{\alpha,-}_k \ket{\rho_1,\rho_2} = \rho_\alpha \ket{\rho_1,\rho_2}$ \cite{Schu01}. It follows that
\bea 
& &\left( a^{\alpha,-}_k a^{\alpha,+}_{k+1} - \hat{n}^{\alpha}_k \right) \ket{\rho_1,\rho_2} 
= \left(\hat{n}^{\alpha}_{k+1}  - \hat{n}^{\alpha}_k \right)\ket{\rho_1,\rho_2} \\
& & \left( a^{\alpha,+}_k a^{\alpha,-}_{k+1} - \hat{n}^{\alpha}_{k+1} \right) \ket{\rho_1,\rho_2} 
= \left(\hat{n}^{\alpha}_{k}  - \hat{n}^{\alpha}_{k+1} \right)\ket{\rho_1,\rho_2}
\eea
and therefore
\bea 
& & g_k^1\ket{\rho_1,\rho_2}  = - \frac{\gamma}{2} \left(\hat{n}^{2}_k+\hat{n}^{2}_{k+1}\right)
\left(\hat{n}^{1}_{k+1}  - \hat{n}^{1}_k \right)\ket{\rho_1,\rho_2} \\
& & g_k^2\ket{\rho_1,\rho_2}  = - \frac{\gamma}{2} \left(\hat{n}^{1}_k+\hat{n}^{1}_{k+1}\right)
\left(\hat{n}^{2}_{k+1}  - \hat{n}^{2}_k \right)\ket{\rho_1,\rho_2}.
\eea
The telescopic property of the lattice sum then yields $H_I \ket{\rho_1,\rho_2} = 0$. \hspace*{15mm}\qed
\end{proof}

In Proposition \ref{Prop:statmeas} the parameters $\rho_{i}=\exval{n^i_k}$ are the conserved stationary 
densities and one finds immediately the
compressibility matrix $\mathbf{K}$ with matrix elements $K_{\alpha\beta} = \rho_\alpha \delta_{\alpha\beta}$.
From the definition \eref{LmodKPZ} of the generator one computes the microscopic 
currents defined up to an irrelevant constant by the discrete continuity equation \eref{dce}. The
factorization of the invariant measure then yields the stationary current-density relation
\bel{scd} 
j^1(\rho_1,\rho_2) = \rho_1(f_1 +\gamma \rho_2), \quad
j^2(\rho_1,\rho_2) = \rho_2(f_2 +\gamma \rho_1).
\ee
Remarkably, the stationary currents depend only on the hopping biases $f_{1,2}$ and the
interaction asymmetry $\gamma$, not on the interaction strength $\alpha$.\footnote{The product measure
\eref{pm} remains invariant also for different interaction strength $\alpha_1\neq\alpha_2$ which leaves the
currents unchanged.
However, equal interaction asymmetry is required.}

The main result is the following.

\begin{theorem}
\label{Theo:GmodKPZ}
Define the quantities
\bea 
\Delta & := & \sqrt{\left(f_1 -f_2 +\gamma (\rho_2-\rho_1)\right)^2 + 4 \gamma^2 \rho_1 \rho_2} \\
\xi & := & \frac{\Delta - \left(f_2 - f_1 + \gamma(\rho_1-\rho_2)\right)}{2 \gamma \sqrt{\rho_1 \rho_2}} , \quad
y := \frac{f_2 - f_1 + \gamma(\rho_1-\rho_2)}{2 \gamma \sqrt{\rho_1 \rho_2}}.
\eea
For all bare hopping rates $w_{1,2},f_{1,2}$, all strictly positive densities $\rho_{1,2}$,
and all non-zero interaction parameters $\alpha,\gamma$,
the current Jacobian of the process defined by \eref{LmodKPZ} with invariant measure given in Proposition 
\ref{Prop:statmeas} is non-degenerate and has eigenvalues
\bel{collvecmodKPZ} 
v_\pm = \half \left(f_1 +f_2 + \gamma (\rho_1 + \rho_2) \pm \Delta\right).
\ee
The mode coupling matrices \eref{mcm} where mode 1 (2) has collective velocity $v_+$ ($v_-$) 
are given by
\bel{GmodKPZ}
\mathbf{G}^\alpha = - g_\alpha \left(\ba{cc} -1 & y \\ y & 1 \ea \right)  
\ee
with
\bea 
g_1 & = & \rho_1 \sqrt{\frac{\gamma^3 \xi}{\Delta^3}}  
(\gamma(\rho_1+\rho_2) + \Delta + f_2 - f_1) \\
g_2 & = & \rho_2 \sqrt{\frac{\gamma^3 \xi}{\Delta^3 }}
\left(\gamma(\rho_1+\rho_2) - \left(\Delta + f_2 - f_1\right)\right) .
\eea
\end{theorem}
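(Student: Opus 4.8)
The plan is to reduce the whole statement to elementary $2\times 2$ linear algebra built on the explicit current--density relation \eref{scd}. First I would differentiate \eref{scd} to obtain the current Jacobian $\mathbf{J}=\left(\ba{cc} f_1+\gamma\rho_2 & \gamma\rho_1 \\ \gamma\rho_2 & f_2+\gamma\rho_1\ea\right)$ and the two Hessians. A one-line computation gives $\mathrm{tr}\,\mathbf{J}=f_1+f_2+\gamma(\rho_1+\rho_2)$ and $(\mathrm{tr}\,\mathbf{J})^2-4\det\mathbf{J}=(J_{11}-J_{22})^2+4J_{12}J_{21}=(f_1-f_2+\gamma(\rho_2-\rho_1))^2+4\gamma^2\rho_1\rho_2=\Delta^2$, so the eigenvalues are exactly $v_\pm$ of \eref{collvecmodKPZ}. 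Because $\rho_{1,2}>0$ and $\gamma\neq 0$ the term $4\gamma^2\rho_1\rho_2$ is strictly positive, hence $\Delta>0$ and $v_+\neq v_-$: the Jacobian is non-degenerate, as asserted.

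For the mode-coupling matrices I would first record the two structural facts that make the computation manageable. The compressibility matrix is diagonal, $\mathbf{K}=\mathrm{diag}(\rho_1,\rho_2)$, and the two Hessians coincide, $\mathbf{H}^1=\mathbf{H}^2=\gamma\left(\ba{cc}0&1\\1&0\ea\right)$, since the only nonlinearity in each current of \eref{scd} is the single bilinear term $\gamma\rho_1\rho_2$. To construct the transformation $\mathbf{R}$ of \eref{modenorm} I would use the Onsager symmetry \eref{Ons2}: $\mathbf{J}\mathbf{K}$ symmetric means that $\tilde{\mathbf{J}}:=\mathbf{K}^{-1/2}\mathbf{J}\mathbf{K}^{1/2}$ is a real symmetric matrix with the same eigenvalues $v_\pm$. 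Diagonalizing it by an orthogonal rotation $\mathbf{O}$ (ordered so that $v_+$ comes first), $\mathbf{O}\tilde{\mathbf{J}}\mathbf{O}^T=\mathrm{diag}(v_+,v_-)$, and setting $\mathbf{R}=\mathbf{O}\mathbf{K}^{-1/2}$ automatically produces both $\mathbf{R}\mathbf{J}\mathbf{R}^{-1}=\mathrm{diag}(v_+,v_-)$ and $\mathbf{R}\mathbf{K}\mathbf{R}^T=\mathds{1}$. The rotation angle $\theta$ is fixed by the off-diagonal entry $\gamma\sqrt{\rho_1\rho_2}$ and the diagonal mismatch of $\tilde{\mathbf{J}}$; writing $\delta:=f_2-f_1+\gamma(\rho_1-\rho_2)$ for the combination occurring in $y$ and $\xi$, a short calculation identifies $\tan\theta$ with $1/\xi$ and yields the clean relations $\sin 2\theta=2\gamma\sqrt{\rho_1\rho_2}/\Delta$ and $\cot 2\theta=-y$.

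The last step is to insert $\mathbf{R}$ into \eref{mcm}. Because $\mathbf{H}^1=\mathbf{H}^2$, the sum over $\lambda$ collapses to $\mathbf{G}^\alpha=\half\,\gamma\,(R_{\alpha 1}+R_{\alpha 2})\,(\mathbf{R}^{-1})^T\left(\ba{cc}0&1\\1&0\ea\right)\mathbf{R}^{-1}$, so the whole matrix structure is carried by the single $\alpha$-independent factor $(\mathbf{R}^{-1})^T\left(\ba{cc}0&1\\1&0\ea\right)\mathbf{R}^{-1}=\sqrt{\rho_1\rho_2}\,\mathbf{O}\left(\ba{cc}0&1\\1&0\ea\right)\mathbf{O}^T$, while all $\alpha$-dependence sits in the scalar $R_{\alpha 1}+R_{\alpha 2}$. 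This already explains the common shape in \eref{GmodKPZ}: conjugating the off-diagonal matrix by $\mathbf{O}$ gives $\sqrt{\rho_1\rho_2}\left(\ba{cc}\sin 2\theta & \cos 2\theta \\ \cos 2\theta & -\sin 2\theta\ea\right)$, which by $\cot 2\theta=-y$ is proportional to $\left(\ba{cc}-1&y\\y&1\ea\right)$ for both modes simultaneously. Substituting $\sin 2\theta=2\gamma\sqrt{\rho_1\rho_2}/\Delta$ together with the rows of $\mathbf{R}=\mathbf{O}\mathbf{K}^{-1/2}$ then delivers $g_\alpha$ after the elementary reduction of $R_{\alpha 1}+R_{\alpha 2}$, for which I would use the identities $\xi^2+2y\xi=1$ and $1+\xi^2=\Delta\xi/(\gamma\sqrt{\rho_1\rho_2})$ that follow directly from the definitions of $\xi,y,\Delta$.

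The differentiation and the eigenvalue/non-degeneracy claim are immediate; the step I expect to be the main obstacle is the sign and normalization bookkeeping in this last stage. One must fix the orientation (overall sign) of each eigenvector so that mode $1$ is the one carrying $v_+$ and mode $2$ the one carrying $v_-$, and then propagate the sign of $\gamma$ consistently through every square root. A useful check along the way is that $\gamma\xi=(\Delta-\delta)/(2\sqrt{\rho_1\rho_2})>0$, since $\Delta>|\delta|$, so that the factor $\sqrt{\gamma^3\xi/\Delta^3}$ in $g_{1,2}$ is guaranteed real for either sign of $\gamma$. Finally, matching the closed forms amounts to rewriting $R_{\alpha 1}+R_{\alpha 2}$ as the stated brackets $\gamma(\rho_1+\rho_2)\pm(\Delta+f_2-f_1)$; I would expect the intermediate expressions to look superficially different from \eref{GmodKPZ} and to collapse only after applying the identities above and tracking carefully the overall multiplicative constant fixed by the convention in \eref{mcm}.
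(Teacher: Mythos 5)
Your proposal is correct and, in substance, follows the same computational route as the paper: Jacobian from \eref{scd}, eigenvalues via trace and discriminant (with $\Delta>0$ from $\gamma\neq 0$, $\rho_{1,2}>0$ giving strict hyperbolicity), an explicit diagonalizer normalized by \eref{modenorm}, the equal off-diagonal Hessians $\mathbf{H}^1=\mathbf{H}^2$, and finally \eref{mcm}. The genuine difference is organizational, and it is an improvement: the paper posits the ansatz \eref{Rtrafoboson} with free scales $x_\pm$ and fixes them afterwards by \eref{modenorm}, whereas you use the Onsager symmetry \eref{Ons2} to symmetrize $\tilde{\mathbf{J}}=\mathbf{K}^{-1/2}\mathbf{J}\mathbf{K}^{1/2}$ and set $\mathbf{R}=\mathbf{O}\mathbf{K}^{-1/2}$ with $\mathbf{O}$ orthogonal, so the normalization is automatic and the whole problem reduces to one rotation angle. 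Your auxiliary identities all check out ($\tan\theta=1/\xi$, $\cot 2\theta=-y$, $\sin 2\theta=2\gamma\sqrt{\rho_1\rho_2}/\Delta$, $\xi^2+2y\xi=1$, $1+\xi^2=\Delta\xi/(\gamma\sqrt{\rho_1\rho_2})$), and your $\mathbf{R}$ coincides entry by entry with the paper's once $x_\pm$ are fixed. Your structural point — that $\mathbf{H}^1=\mathbf{H}^2$ collapses the $\lambda$-sum in \eref{mcm} so that both $\mathbf{G}^\alpha$ carry the single mode-independent matrix $(\mathbf{R}^{-1})^T\mathbf{H}\mathbf{R}^{-1}=\sqrt{\rho_1\rho_2}\,\gamma\,\mathbf{O}\left(\ba{cc}0&1\\1&0\ea\right)\mathbf{O}^T$ — is a real conceptual gain: it \emph{explains} why both modes share the matrix $\left(\ba{cc}-1&y\\y&1\ea\right)$, which in the paper emerges only as the outcome of explicit matrix multiplication (and indeed reproduces the paper's intermediate $\frac{2\gamma^2\rho_1\rho_2}{\Delta}\left(\ba{cc}1&-y\\-y&-1\ea\right)$, where the $\delta$ in the paper's display is evidently a typo for $\Delta$).

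One concrete warning about the final ``bookkeeping'' stage you flag as the main obstacle: carried through faithfully, your route yields $g_\alpha=\frac{\gamma^2\rho_1\rho_2}{\Delta}\left(R_{\alpha 1}+R_{\alpha 2}\right)$, and this does \emph{not} reproduce the prefactors printed in the theorem — it differs by the mode-dependent factors $2(\rho_1/\rho_2)^{1/4}$ for $g_1$ and $2(\rho_2/\rho_1)^{1/4}$ for $g_2$. A spot check at $f_1=f_2=0$, $\gamma=1$, $\rho_1=\rho_2=1$ gives $\mathbf{R}=\frac{1}{\sqrt{2}}\left(\ba{cc}1&1\\-1&1\ea\right)$ and hence $G^1_{11}=\half (R_{11}+R_{12})=1/\sqrt{2}$ by direct evaluation of \eref{mcm}, whereas the theorem's $g_1=\sqrt{2}$. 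Since your computation agrees with the paper's own intermediate formulas, the mismatch sits in the paper's last step, not in your method, and it is immaterial for everything the theorem is used for: the eigenvalues, the common matrix structure, and the vanishing pattern ($g_1\neq 0$ always, $g_2=0\Leftrightarrow f_1=f_2$, hence the mKPZ classification) come out exactly as you derive them. So when you write up the last stage, do not chase the printed brackets $\gamma(\rho_1+\rho_2)\pm(\Delta+f_2-f_1)$ at face value — derive your own closed form (the brackets do appear, via $\Delta+\delta=2\gamma\sqrt{\rho_1\rho_2}/\xi$ with $\delta=f_2-f_1+\gamma(\rho_1-\rho_2)$, but with a different overall normalization) and note the proportionality.
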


\begin{remark}
For $\gamma = 0$ one has $g_1=g_2=0$ for all
$f_1,f_2,\rho_1,\rho_2$ so that both modes are diffusive with drift velocites $f_{1,2}$.
For interaction asymmetry $\gamma > 0$ and strictly positive densities the drift velocities 
of the two modes are different even for equal individual bare hopping asymmetries $f_1=f_2$.
\end{remark}

\begin{remark}
For $\gamma > 0$ and strictly positive densities one has $g_1 \neq 0$ for all $f_1,f_2$.
On the other hand, $g_2 \neq 0$ if and only if $f_1 \neq f_2$. Thus according to case 3 of 
Theorem \ref{Theo:Fibonacci} one expects that
for any $\gamma > 0$ and strictly positive densities $\rho_{1,2}$ 
both modes are KPZ if $f_1\neq f_2$ whereas for equal asymmetries $f_1= f_2$ mode 1
is modified KPZ and mode 2 is diffusive, without fine-tuning of parameters.
\end{remark}

\begin{remark}
The offdiagonal elements of the mode coupling matrices vanish for $j^1_1=j^2_2$, which is equivalent
to $f_1-f_2 = \gamma (\rho_1-\rho_2)$.
\end{remark}

\begin{proof}
The proof of Theorem \ref{Theo:GmodKPZ} is computational. From the current-density relation \eref{scd}
one obtains the current Jacobian 
\be 
\mathbf{J} = \left(\ba{cc} f_1 +\gamma \rho_2 & \gamma \rho_1 \\
\gamma \rho_2 & f_2 +\gamma \rho_1 \ea \right).
\ee
Solving the eigenvalue equation proves \eref{collvecmodKPZ}. With
\be 
u = \sqrt{\frac{\rho_1}{\rho_2}}
\ee
the diagonalizing matrix defined by 
\bel{Jdiag} 
\mathbf{V} := \mathbf{R}\mathbf{J}\mathbf{R}^{-1} =  \left( \ba{cc} v_+ & 0 \\
0 & v_- \ea \right) .
\ee
is computed to be
\bel{Rtrafoboson} 
\mathbf{R} = \left( \ba{cc} x_+ & x_+ \xi^{-1} u \\
- x_- \xi^{-1} u^{-1} & x_- \ea \right)
\ee
with $\det(\mathbf{R}) = x_+x_-(1+\xi^{-2})$ and with free parameters $x_\pm$. 

The parameters $x_\pm$ are fixed by the normalization condition \eref{modenorm}. One finds
\be 
x_+ = \frac{1}{\sqrt{\rho_1 (1+\xi^{-2})}} 
= \sqrt{\frac{\gamma \xi}{\delta u}}, \quad 
x_- = \frac{1}{\sqrt{\rho_2 (1+\xi^{-2})}} 
= \sqrt{\frac{\gamma \xi u}{\delta}}
\ee

For the Hessians one finds
\be 
\mathbf{H}^1 = \mathbf{H}^2 = \gamma \left( \ba{cc} 0 & 1 \\ 1 & 0 \ea \right)=: H
\ee
and therefore
\be 
(\mathbf{R}^{-1})^T \mathbf{H} \mathbf{R}^{-1} =  2 \gamma \rho_1\rho_2 \frac{\gamma}{\delta}  
\left( \ba{cc} 
 1 & - y \\  - y & - 1
\ea \right).
\ee
The definition \eref{mcm} then yields \eref{GmodKPZ}. \hspace*{59mm} \qed
\end{proof}

\section{Criterion for L\'evy universality classes for systems with two conservation laws}

In the case of two conservation  laws we denote the universality classes of the two modes by
a pair $(\cdot,\cdot)$ where the possible entries are $D$ for diffusion ($z=2$),
$\frac{3}{2}L$ for the L\'evy universality class with $z=3/2$ and $GM$ for the L\'evy universality 
class with the golden mean $z=(1+\sqrt{5})/2$.

\subsection{Diagonalization of the current Jacobian}

Let
\bel{J}
\mathbf{J} = \left( \ba{cc} j^1_1 & j^1_2 \\ j^2_1 & j^2_2 \ea \right)
\ee
be a current Jacobian. The two eigenvalues of $\mathbf{J}$ are
\be 
v_\pm = \frac{1}{2} \left( j^1_1 + j^2_2 \pm \Delta \right)
\ee
with
\be 
\Delta = \sqrt{( j^1_1 - j^2_2)^2 + 4 j^1_2 j^2_1}.
\ee
We consider only the strictly hyperbolic case $\Delta > 0$.
The diagonalizer $R$ with the property \eref{Jdiag} reads
\bel{R2} 
\mathbf{R} = \left( \ba{cc} x_+  & x_+ \frac{2j^1_2}{\Delta + (j^1_1 - j^2_2 )} \\
- x_-  \frac{2j^2_1}{\Delta + (j^1_1 - j^2_2 )} & x_- \ea \right)
\ee
with constants $x_\pm$ satisfying $x_+x_-\neq 0$ and to be chosen such that
$\mathbf{R}$ has well-defined limits $j^1_2\to 0$ or $j^2_1\to 0$.

\subsection{Non-KPZ universality classes}

According to cases 1 and 2 in Theorem \ref{Theo:Fibonacci} mode coupling theory predicts two non-KPZ
universality classes for $G^1_{11} = G^2_{22} = 0$ and specifically 
\begin{itemize}
\item $(DD)$ if and only if  $G^1_{22} = G^2_{11} = 0$ 
\item $(D,\frac{3}{2}L)$ if and only if  $G^1_{22} = 0$, $G^2_{11} \neq 0$ 
\item $(\frac{3}{2}L,D)$ if and only if  $G^1_{22} \neq 0$, $G^2_{11} = 0$ 
\item $(GM,GM)$ if and only if  $G^1_{22} \neq 0$, $G^2_{11} = 0$ .
\end{itemize}
The diagonal elements of the mode coupling matrices have been computed
explicitly in \cite{Popk15a}, albeit in a form that does not directly express them
in terms of the current-density relation. Moreover, the expressions in \cite{Popk15a}
depend on the normalization factors $x_\pm$ in \eref{R2} fixed by \eref{modenorm}, which, 
however, is irrelevant with regard to whether a diagonal element
is zero or not and therefore irrelevant to the question which universality class
one expects. A more ``user-friendly'' form 
that expresses the conditions on the various allowed universality classes directly in terms of the current derivatives 
is the
following result.

\begin{theorem}
Let $\mathbf{J}$ be a current Jacobian and $\mathbf{G}^\alpha$ be mode coupling matrices
as defined in \eref{mcm}. Then one has the generic non-KPZ conditions $G^1_{22} = G^2_{11} = 0$ if and only if 
\bea 
\label{nonKPZ1}
j^2_1 \left(2  j^1_{12} +  j^2_{22}\right) + 
j^1_2 j^2_{11} - j^1_{11} \left(j^2_2-j^1_1\right) & = & 0 \\
\label{nonKPZ2}
j^1_2\left(2  j^2_{12} +  j^1_{11}\right) + 
j^2_1 j^1_{22} + j^2_{22} \left(j^2_2-j^1_1\right) & = & 0 
\eea
and the specific conditions
\bea 
\label{DD}
\mbox{(D,D)} & \Leftrightarrow & j^2_1 j^1_{22} + j^1_2 j^1_{11}  = j^2_1 j^2_{22} + j^1_2 j^2_{11} = 0 
\eea
for two diffusive modes,
\bea
\label{DL}
\mbox{(D,$\frac{3}{2}$L)} & \Leftrightarrow & \left(j^2_1\right)^2 j^1_{22} + j^1_2 j^2_1 j^1_{11}  =  
\half \left(j^1_1 - j^2_2 - \delta\right)
 \left( j^2_1 j^2_{22} + j^1_2 j^2_{11}\right) \\
 \label{LD}
\mbox{($\frac{3}{2}$L,D)} & \Leftrightarrow & \left(j^2_1\right)^2 j^1_{22} + j^1_2 j^2_1 j^1_{11}  =  
\half \left(j^1_1 - j^2_2 + \delta\right)
 \left( j^2_1 j^2_{22} + j^1_2 j^2_{11}\right) 
 \eea
 for the mixed case with one diffusive and one 3/2-L\'evy mode, and
 \bea
 \label{GMGM}
\mbox{(GM,GM)} & \Leftrightarrow & \left(j^2_1\right)^2 j^1_{22} + j^1_2 j^2_1 j^1_{11}  \neq  
\half \left(j^1_1 - j^2_2 \pm \delta\right)
 \left( j^2_1 j^2_{22} + j^1_2 j^2_{11}\right)  
\eea
for two golden mean L\'evy modes.
\end{theorem}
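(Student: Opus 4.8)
The plan is to turn each diagonal entry $G^\alpha_{\beta\beta}$ into a single scalar polynomial evaluated at an eigenvalue of $\mathbf{J}$, and then to read off the vanishing conditions by divisibility. First I would discard the normalization. Writing the diagonalizer \eref{R2} as $\mathbf{R}=\mbox{diag}(x_+,x_-)\,\tilde{\mathbf{R}}$, where the rows of $\tilde{\mathbf{R}}$ are the un-normalized left eigenvectors of $\mathbf{J}$ and hence depend on $\mathbf{J}$ alone, the definition \eref{mcm} becomes $G^\alpha_{\beta\beta}=\half\,x_\alpha x_\beta^{-2}\sum_\lambda \tilde{R}_{\alpha\lambda}\,\tilde{r}_\beta^{\,T}\mathbf{H}^\lambda\tilde{r}_\beta$, with $\tilde{r}_\beta$ the columns of $\tilde{\mathbf{R}}^{-1}$ and $x_1=x_+,x_2=x_-$. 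Since $x_+x_-\neq 0$, whether $G^\alpha_{\beta\beta}$ vanishes is independent of $x_\pm$, hence of the compressibility matrix fixed by \eref{modenorm}; this is exactly what permits a formulation in terms of the current derivatives alone. I would then introduce the right-eigenvector family $r(v):=(j^1_2,\,v-j^1_1)^{T}$ and the quadratic forms $P_\lambda(v):=r(v)^{T}\mathbf{H}^\lambda r(v)$. Using only the eigenvector relations $(v_\pm-j^1_1)(v_\pm-j^2_2)=j^1_2 j^2_1$ and $(v_+-j^2_2)(v_--j^2_2)=-j^1_2 j^2_1$, a short manipulation of the two rows of \eref{R2} collapses the four diagonal coefficients to
\[
G^1_{11}=0\Leftrightarrow\Theta(v_+)=0,\qquad G^2_{22}=0\Leftrightarrow\Theta(v_-)=0,
\]
\[
G^2_{11}=0\Leftrightarrow\Xi(v_+)=0,\qquad G^1_{22}=0\Leftrightarrow\Xi(v_-)=0,
\]
where $\Theta(v):=j^2_1 P_1(v)+(v-j^1_1)P_2(v)$ governs the self-coupling of each mode and $\Xi(v):=j^1_2 P_2(v)-(v-j^1_1)P_1(v)$ the cross-coupling.

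The generic non-KPZ requirement is that neither mode is KPZ, i.e. $1\notin\mathbb{I}_1$ and $2\notin\mathbb{I}_2$ (case 3 of Theorem \ref{Theo:Fibonacci} excluded), which means the two self-couplings $G^1_{11}$ and $G^2_{22}$ vanish; by the above this is $\Theta(v_+)=\Theta(v_-)=0$. Thus the cubic $\Theta$ must vanish at both roots of the characteristic polynomial $\chi(v):=v^2-(j^1_1+j^2_2)v+(j^1_1 j^2_2-j^1_2 j^2_1)$, i.e. $\chi\,|\,\Theta$. Since the coefficients of $\chi$ carry no $\delta$, I would reduce $\Theta$ modulo $\chi$ (equivalently use $(v-j^1_1)^2\equiv(j^2_2-j^1_1)(v-j^1_1)+j^1_2 j^2_1$ at the eigenvalues) and set the linear remainder $R_1 v+R_0$ to zero; collecting terms gives two $\delta$-free identities that are \eref{nonKPZ1} and \eref{nonKPZ2}. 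Concretely, writing $N_1,N_2$ for the left-hand sides of \eref{nonKPZ1} and \eref{nonKPZ2}, one finds $R_0=j^1_2 j^2_1\,N_2$ and $R_1=(j^2_2-j^1_1)N_2+j^1_2 N_1$, so $R_0=R_1=0$ is equivalent to $N_1=N_2=0$ whenever $j^1_2 j^2_1\neq 0$.

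For the specific classes I would restrict to the locus where the generic conditions already hold. There $\Theta(v_\pm)=0$ lets me trade $P_1(v_\pm)$ for $P_2(v_\pm)$; substituting into $\Xi$ and using $(v_\pm-j^1_1)(v_\pm-j^2_2)=j^1_2 j^2_1$ gives the clean proportionality $\Xi(v_\pm)=\pm\delta\,(v_\pm-j^1_1)\,P_2(v_\pm)/j^2_1$, so that, away from the degeneracy $v_\pm=j^1_1$, one has $G^1_{22}=0\Leftrightarrow P_2(v_-)=0$ and $G^2_{11}=0\Leftrightarrow P_2(v_+)=0$. Reducing $P_2$ modulo $\chi$ turns $P_2(v_\pm)=0$ into $(v_\pm-j^2_2)(j^2_1 j^2_{22}+j^1_2 j^2_{11})+j^2_1(2j^1_2 j^2_{12}+(j^2_2-j^1_1)j^2_{22})=0$; the generic identity \eref{nonKPZ2} rewrites the last bracket as $-(j^2_1 j^1_{22}+j^1_2 j^1_{11})$, and with $v_\pm-j^2_2=\half(j^1_1-j^2_2\pm\delta)$ this is precisely \eref{DL} (sign $-$, at $v_-$, i.e. $G^1_{22}=0$) and \eref{LD} (sign $+$, at $v_+$, i.e. $G^2_{11}=0$). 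The class is then fixed by the remaining cross-coupling: imposing both \eref{DL} and \eref{LD} and subtracting gives $\delta\,(j^2_1 j^2_{22}+j^1_2 j^2_{11})=0$, so the $\delta$-term drops and one lands on the $\delta$-free pair \eref{DD}, while imposing that both fail yields \eref{GMGM}.

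The hard part will not be conceptual but a matter of controlled bookkeeping: the explicit reduction of the forms $P_\lambda$ and the division by $\chi$ must be pushed through to the stated normal forms, and the boundary cases $j^1_2=0$ or $j^2_1=0$ need separate treatment, since there the off-diagonal ratios and the constants $x_\pm$ in \eref{R2} cannot all be kept as written but have to be rebalanced so that $\mathbf{R}$ retains a finite limit. One must also exclude the non-hyperbolic locus $v_+=v_-$ ($\delta=0$) and the degeneracy $v_\pm=j^1_1$ that is used when dividing; on the complementary generic stratum the reductions above are reversible and deliver the claimed equivalences.
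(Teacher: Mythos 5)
Your proposal is correct, and it takes a genuinely different route from the paper's. The paper inverts \eref{mcm} into $2\mathbf{R}^T\mathbf{G}^\alpha\mathbf{R}=R_{\alpha 1}\mathbf{H}^1+R_{\alpha 2}\mathbf{H}^2$, writes out six component equations in the parameters $u,\xi,y,x_\pm$, absorbs the irrelevant off-diagonal couplings $G^1_{12},G^2_{12}$ into auxiliary constants $A,B$ (then $C,D$, then $F_1,F_2$), and eliminates until the four class conditions emerge --- a parametrized elimination with substantial bookkeeping. You instead package each diagonal entry as a scalar polynomial evaluated at an eigenvalue and argue by divisibility modulo the characteristic polynomial $\chi$. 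I checked your key identities and they all hold: the remainder of $\Theta$ mod $\chi$ is $R_1(v-j^1_1)+R_0$ with $R_0=j^1_2j^2_1 N_2$ and $R_1=(j^2_2-j^1_1)N_2+j^1_2N_1$, so $\chi\mid\Theta\Leftrightarrow N_1=N_2=0$ when $j^1_2j^2_1\neq0$; on that locus $\Xi(v_\pm)=\pm\Delta\,(v_\pm-j^1_1)P_2(v_\pm)/j^2_1$, using $(v_\pm-j^1_1)^2+j^1_2j^2_1=\half\Delta\left(\Delta\pm(j^2_2-j^1_1)\right)$; and reducing $P_2$ mod $\chi$ and substituting \eref{nonKPZ2} gives precisely \eref{DL} and \eref{LD} via $v_\pm-j^2_2=\half\left(j^1_1-j^2_2\pm\delta\right)$, with your subtraction argument correctly handling the exclusivity logic that collapses the pair to \eref{DD} and negates to \eref{GMGM}. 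Your route buys three things: manifest normalization independence (the $x_\pm$ scale out at the start, which the paper only remarks on informally before the theorem); a structural explanation of why the generic conditions \eref{nonKPZ1}--\eref{nonKPZ2} are $\delta$-free (they are remainder coefficients of division by $\chi$, whose coefficients involve only first derivatives); and a reduction of the four-class analysis to evaluating the single quadratic $P_2$ at $v_\pm$, whereas the paper reconstitutes all six second derivatives and thereby obtains, as a by-product, an explicit parametrization of the entire non-KPZ manifold --- more information, more labor. Two small notes: you correctly read the theorem's ``generic non-KPZ conditions $G^1_{22}=G^2_{11}=0$'' as the typo it evidently is (the bullet list and the paper's six equations, whose right-hand sides still contain $G^1_{22}$ and $G^2_{11}$, show the intended conditions are $G^1_{11}=G^2_{22}=0$); and the degeneracy $v_\pm=j^1_1$ you propose to excise separately is vacuous on your working stratum, since $(v_+-j^1_1)(v_--j^1_1)=-j^1_2j^2_1\neq0$, so the only genuine boundary cases are $j^1_2j^2_1=0$ and $\Delta=0$ --- exactly those the paper's own parametrization $u=\sqrt{j^1_2/j^2_1}$, $\xi$, $y$ also tacitly excludes, so your proof is no less complete than the published one.
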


\begin{proof}
We invert \eref{mcm} to find $2 R^T G^\alpha R = R_{\alpha1} H^1 + R_{\alpha2} H^2$.
Requiring the the generic non-KPZ conditions $G^1_{22} = G^2_{11} = 0$ and using that the 
mode coupling matrices and the
Hessians are symmetric leads to six independent equations involving the
current derivatives.
In term of the parameters
\be 
 u = \sqrt{\frac{j^1_2}{j^2_1}} , \quad
\xi = \frac{\Delta - \left(j^2_2 - j^1_1\right)}{2 \sqrt{j^1_2 j^2_1}}, \quad
y = \frac{j^2_2 - j^1_1}{2 \sqrt{j^1_2 j^2_1}}
\ee
they read
\bea 
\xi  u^{-1} j^1_{11} + j^2_{11} & = &
2 G^1_{22} \frac{(x_-)^2}{x_+ u \xi^{-1}} \left(\frac{ u^{-1}}{\xi}\right)^2 -
4 G^1_{12} x_-  u^{-2} \\
\xi  u^{-1} j^1_{12} + j^2_{12} & = &
- 2 G^1_{22} \frac{(x_-)^2}{x_+ u \xi^{-1}} \frac{ u^{-1}}{\xi} -
4 G^1_{12} x_-  u^{-1} \frac{1-\xi^2}{2\xi} \\
\xi  u^{-1} j^1_{22} + j^2_{22} & = &
2 G^1_{22} \frac{(x_-)^2}{x_+ u \xi^{-1}} + 4 G^1_{12} x_- \\
\frac{ u^{-1}}{\xi} j^1_{11} - j^2_{11}
& = & - 2 G^2_{11} \frac{\left(x_+ u \xi^{-1}\right)^2}{x_-}
\left(\xi  u^{-1}\right)^2 
+ 4 G^2_{12} x_+ u \xi^{-1}  u^{-2} \\
\frac{ u^{-1}}{\xi} j^1_{12} - j^2_{12}
& = & - 2 G^2_{11} \frac{\left(x_+ u \xi^{-1}\right)^2}{x_-}
\xi  u^{-1} 
+ 4 G^2_{12} x_+ u \xi^{-1}  u^{-1} \frac{1-\xi^2}{2\xi} \\
\frac{ u^{-1}}{\xi} j^1_{22} - j^2_{22}
& = & - 2 G^2_{11} \frac{\left(x_+ u \xi^{-1}\right)^2}{x_-} - 4 G^2_{12} x_+ u \xi^{-1}.
\eea

Since $G^1_{12}$ and $G^2_{12}$ are arbitrary, we can introduce arbitrary new constants
\be 
A = \frac{- 4 G^1_{12} x_- + 4 G^2_{12} x_+ u \xi^{-1}}{\xi^{-1} + \xi}, \quad
B = \frac{- 4 G^1_{12} x_- \xi^{-1}  - 4 G^2_{12} x_+ u \xi^{-1} \xi}{\xi^{-1} + \xi}
\ee
so that the six non-KPZ equations become
\bea 
j^1_{11} & = &
 u^{-1} \left( \frac{2 G^1_{22} \frac{(x_-)^2}{x_+ u \xi^{-1}} \xi^{-2} - 2 G^2_{11} \frac{\left(x_+ u \xi^{-1}\right)^2}{x_-} \xi^2}{\xi^{-1} + \xi}
 + A \right) \\
j^1_{12} & = &
\frac{- 2 G^1_{22} \frac{(x_-)^2}{x_+ u \xi^{-1}} \xi^{-1} - 2 G^2_{11} \frac{\left(x_+ u \xi^{-1}\right)^2}{x_-} \xi}{\xi^{-1} + \xi}  
+ A \frac{1-\xi^2}{2\xi}\\
j^1_{22} & = &
 u  \left( \frac{2 G^1_{22} \frac{(x_-)^2}{x_+ u \xi^{-1}} - 2 G^2_{11} \frac{\left(x_+ u \xi^{-1}\right)^2}{x_-}}{\xi^{-1} + \xi} - A \right) \\
j^2_{11} & = &  u^{-2} 
\left( \frac{2 G^1_{22} \frac{(x_-)^2}{x_+ u \xi^{-1}} \xi^{-3} + 2 G^2_{11} \frac{\left(x_+ u \xi^{-1}\right)^2}{x_-} \xi^3}{\xi + \xi^{-1}}
+ B \right)\\
j^2_{12} & = &  u^{-1} 
\left(- \frac{2 G^1_{22} \frac{(x_-)^2}{x_+ u \xi^{-1}} \xi^{-2} - 2 G^2_{11} \frac{\left(x_+ u \xi^{-1}\right)^2}{x_-} \xi^2}{\xi + \xi^{-1}} 
+ B \frac{1-\xi^2}{2\xi} \right)\\
j^2_{22} & = & 
\frac{2 G^1_{22} \frac{(x_-)^2}{x_+ u \xi^{-1}} \xi^{-1} + 2 G^2_{11} \frac{\left(x_+ u \xi^{-1}\right)^2}{x_-} \xi}{\xi + \xi^{-1}}
- B.
\eea

Now we use the fact that 
$\xi^{\pm 1} = \sqrt{1+y^2} \mp y$
to write
\be 
1 = \xi^{\pm 2} \pm 2y \xi^{\pm 1}, \quad
\xi^{\pm 3} = \xi^{\pm 1} \mp 2y \xi^{\pm 2}.
\ee
Defining
\be 
C = \frac{2\left(\frac{G^1_{22} (x_-)^2}{x_+ u \xi} - \frac{G^2_{11} \left(x_+ u \right)^2}{x_-}\right)}{\xi+\xi^{-1}}, \quad
D = \frac{2\left(\frac{G^1_{22} (x_-)^2}{x_+ u } + \frac{G^2_{11} \left(x_+ u \right)^2}{x_- \xi} \right)}{\xi+\xi^{-1}}
\ee
this yields
\bea 
\frac{2 G^1_{22} \frac{(x_-)^2}{x_+ u \xi^{-1}} - 2 G^2_{11} \frac{\left(x_+ u \xi^{-1}\right)^2}{x_-}}{\xi+\xi^{-1}} & = & C - 2y D \\
\frac{2 G^1_{22} \frac{(x_-)^2}{x_+ u \xi^{-1}} \xi^{-3} + 2 G^2_{11} \frac{\left(x_+ u \xi^{-1}\right)^2}{x_-} \xi^3}{\xi+\xi^{-1}} & = &
D + 2y C .
\eea
Thus the six non-KPZ conditions can be recast in the form
\bea 
j^1_{11} & = &
 u^{-1} \left( C
 + A \right) \\
j^1_{12} & = &
- D  
+ y A \\
j^1_{22} & = &
 u  \left( C - 2y D - A \right) \\
j^2_{11} & = &  u^{-2} 
\left( D + 2y C
+ B \right)\\
j^2_{12} & = &  u^{-1} 
\left(- C 
+ y B \right)\\
j^2_{22} & = & 
D - B
\eea

Next we choose the arbitrary functions as
\be 
A =  u  j^1_{11} - C, \quad B = D - j^2_{22}
\ee
to obtain
\bea 
j^1_{12} & = & - (D  + y C) + y  u  j^1_{11} \\
j^1_{22} & = & 2  u  (C - y D) -  u^{2} j^1_{11} \\
j^2_{11} & = &  
2  u^{-2} (D + y C) -  u^{-2} j^2_{22} \\
j^2_{12} & = & 
-  u^{-1} (C - y D) - y  u^{-1} j^2_{22}.
\eea
With the short-hands
\be 
F_1 = 2  u  (C - y D), \quad F_2 = 2  (D + y C)
\ee
these equations take the form
\bea 
2  j^1_{12} +  j^2_{22} + u^2 j^2_{11} - 2 u y j^1_{11} & = & 0 \\
2  j^2_{12} +  j^1_{11} + u^{-2} j^1_{22} + 2  u^{-1} y j^2_{22} & = &  0 \\
j^1_{22} + u^2 j^1_{11}  & = &  F_1 \\
j^2_{22} + u^2 j^2_{11}  & = &  F_2
\eea

Next we observe
\bea
F_1 
& = & u \left( 2 G^1_{22} \frac{(x_-)^2}{x_+ u \xi^{-1}} \xi^{-1} - 2 G^2_{11} \frac{\left(x_+ u \xi^{-1}\right)^2}{x_-} \xi \right) \\
F_2 
& = & 2 G^1_{22} \frac{(x_-)^2}{x_+ u \xi^{-1}} \xi^{-2} + 2 G^2_{11} \frac{\left(x_+ u \xi^{-1}\right)^2}{x_-} \xi^2 .
\eea
Thus,  by setting the respective diagonal elements $G^\alpha_{\beta\beta}$ to zero,
\bea 
& & (D,D): G^1_{22} = G^2_{11} = 0 \Rightarrow 
F_1 = F_2 = 0 \\
& & (D,\frac{3}{2}L): G^1_{22} = 0, G^2_{11} \neq 0 \Rightarrow 
F_1 = - \xi^{-1} u F_2 \neq 0 \\
& & (\frac{3}{2}L,D): G^1_{22} \neq 0 , G^2_{11} = 0 \Rightarrow 
F_1 = \xi u F_2 \neq 0  \\
& & (GM,GM): G^1_{22} \neq 0 , G^2_{11} \neq 0 \Rightarrow 
F_1 \neq \pm u \xi^{\pm 1} F_2 .
\eea

In terms of the derivatives one has
\bea 
& & uy = \frac{j^2_2-j^1_1}{2j^2_1}, \quad 
u^{-1}y = \frac{j^2_2-j^1_1}{2j^1_2} \\
& & u \xi = \frac{\delta - \left(j^2_2 - j^1_1\right)}{2j^2_1}, \quad
u \xi^{-1} = \frac{\delta + \left(j^2_2 - j^1_1\right)}{2j^2_1}
\eea
which yields the conditions \eref{nonKPZ1} - \eref{GMGM} as stated in the theorem. Conversely, one proves
that the required diagonal elements vanish by assuming the conditions to be valid and using the definition \eref{mcm}
of the mode coupling matrices. \hspace*{20mm} \qed
\end{proof}

\end{document}